\numberwithin{equation}{section}
\newtheorem{thm}{Theorem}[section]
\newtheorem{lem}[thm]{Lemma}
\newtheorem{prop}[thm]{Proposition}
\theoremstyle{definition}
\numberwithin{equation}{section}
\newcounter{alphabet}
\newcounter{tmp}
\newenvironment{Thm}[1][]{\refstepcounter{alphabet}%
\bigskip%
\noindent%
{\bf Theorem \Alph{alphabet}}%
\ifthenelse{\equal{#1}{}}{}{ (#1)}%
{\bf .}
\itshape}{\vskip 8pt}
\renewcommand{\arg}{{\mathroman{arg}\,}}
\newcommand{\B}{{\mathcal B}}
\newcommand{\Co}{{\mathcal{C}o}}
\newcommand{\C}{{\mathbb C}}
\newcommand{\D}{{\mathbb D}}
\newcommand{\F}{{\mathcal F}}
\newcommand{\R}{{\mathbb R}}
\newcommand{\X}{{\mathbf X}}
\newcommand{\bD}{{\overline{\mathbb D}}}
\newcommand{\End}{{\operatorname{End}}}
\renewcommand{\Re}{{\operatorname{Re}\,}}
\newcommand{\inv}{^{-1}}
\renewcommand{\arg}{\,{\operatorname{arg}\,}}
\newcommand{\aand}{{\quad\text{and}\quad}}
\keywords{Concave functions, Dieudonn\'e's lemma}
\subjclass[2010]{Primary 30C45}
\begin{document}
\title[Second Hankel determinant of concave functions]{
On the second Hankel determinant of concave functions
}

\author[R. Ohno]{\noindent Rintaro Ohno}
\email{rohno@ims.is.tohoku.ac.jp}
\address{\newline Graduate School of Information Sciences,
\newline Tohoku University, 
\newline Aoba-ku, Sendai 980-8579, Japan}

\author[T. Sugawa]{\noindent Toshiyuki Sugawa}
\email{sugawa@math.is.tohoku.ac.jp}
\address{\newline Graduate School of Information Sciences,
\newline Tohoku University, 
\newline Aoba-ku, Sendai 980-8579, Japan}

\begin{abstract}
In the present paper, we will discuss the Hankel determinants $H(f)
=a_2a_4-a_3^2$ of order 2 
for normalized concave functions $f(z)=z+a_2z^2+a_3z^3+\dots$
with a pole at $p\in(0,1).$
Here, a meromorphic function is called concave if it maps the unit disk
conformally onto a domain whose complement is convex. 
To this end, we will characterize the coefficient body of order 2
for the class of analytic functions $\varphi(z)$ on $|z|<1$
with $|\varphi|<1$ and $\varphi(p)=p.$
We believe that this is helpful for other extremal problems concerning
$a_2, a_3, a_4$ for normalized concave functions with a pole at $p.$
\end{abstract}
\dedicatory{In dedication to Professor Karl-Joachim Wirths on his 70th birthday.}
\thanks{
The first author was supported by JSPS Grant-in-Aid for JSPS Fellows No. $26\cdot2855$.}
\maketitle

\section{Introduction}
A meromorphic function $f$ on the unit disk
$\D=\{z: |z|<1\}$ of the complex plane $\C$ is called {\it concave}
if $f$ is univalent and if $\C\setminus f(\D)$ is convex.
Such functions are intensively studied by Avkhadiev, Bhowmik, Pommerenke,
Wirths and others in recent years, see \cite{APW06, AW02, AW05, AW07, BPW07}.
For $p\in\D\setminus\{0\},$
we denote by $\Co_p$ the set of concave functions $f$ with a pole
at $p$ normalized by $f(0)=0$ and $f'(0)=1.$
By a suitable rotation, we will assume without loss of generality that
$0<p<1$ in what follows.
Each function $f$ in $\Co_p$ can be expanded in the form
$f(z)=z+a_2z^2+a_3z^3+\dots$ for $|z|<p.$
We sometimes write $a_n=a_n(f)$ to indicate that the coefficients belong to the function $f$.

By $\End(\D)$ we denote the set of analytic endomorphisms (self-maps)
of the unit disk $\D.$
Let $\B_p$ stand for the class of $\varphi\in\End(\D)$ fixing the point $p.$
The first author gave the following characterization of the functions in $\Co_p$  in \cite{O13}.

\begin{Thm}\label{Thm:O}
Let $0<p<1.$
For $f\in\Co_p,$ there exists a $\varphi\in\B_p$ such that
\begin{equation}\label{eq:f'}
f'(z)=\frac{p^2}{(z-p)^2(1-pz)^2}\exp\int_0^z
\frac{-2\varphi(t)}{1-t\varphi(t)}dt,
\quad z\in\D.
\end{equation}
Conversely, for a given $\varphi\in\B_p,$
there exists a function $f\in\Co_p$ satisfying \eqref{eq:f'}.
\end{Thm}

We remark that the condition $\varphi(p)=p$ comes from the demand
that $f'(z)$ should have no residue at $z=p.$

For $a\in\D,$ the M\"obius transformation
$$
T_a(z)=\frac{a-z}{1-\bar az}=-[z,a]
$$
is an analytic involution of $\D$ interchanging $0$ and $a.$
Here $[z,w]=(z-w)/(1-\bar wz)$ denotes the {\it complex pseudo-hyperbolic
distance} introduced by Beardon and Minda \cite{BM04}.
Let $\zeta\in\partial\D.$
Then, the conjugation $\rho_\zeta$ of the rotation $z\mapsto \zeta z$ by $T_p$
is an analytic automorphism of $\D$ contained in $\B_p.$
More explicitly, $\rho_\zeta$ is expressed by
$$
\rho_\zeta(z)=T_p(\zeta T_p(z))
=\frac{(\zeta-p^2)z+(1-\zeta)p}{-(1-\zeta)pz+1-p^2\zeta}
=\alpha_0+\alpha_1z+\alpha_2z^2+\dots,
$$
where
\begin{equation}\label{eq:alpha}
\alpha_0=\frac{(1-\zeta)p}{1-p^2\zeta},
\aand
\alpha_k=\frac{\zeta(1-p^2)^2(1-\zeta)^{k-1}p^{k-1}}{(1-p^2\zeta)^{k+1}},
\quad k=1,2,3,\dots.
\end{equation}
Obviously, $\rho_\zeta$ can be defined for $\zeta\in\bD$
as an analytic endomorphism of $\D.$
Noting the fact that
$$
\frac{-2\rho_\zeta(z)}{1-z\rho_\zeta(z)}
=\frac{(\zeta(z-p)^2-(1-pz)^2)'}{\zeta(z-p)^2-(1-pz)^2},
$$
we see that the function determined by \eqref{eq:f'} with the choice
$\varphi=\rho_\zeta$ is given by
\begin{align}\label{eq:F}
F_\zeta(z)&=\frac{z-T_p(p\zeta)z^2}{(1-z/p)(1-pz)} \\
\notag
&=\frac z{1-p^2\zeta}\left[
\frac 1{1-z/p}-\frac{p^2\zeta}{1-pz}\right] \\
\notag
&=\sum_{n=1}^\infty\frac{1-p^{2n}\zeta}{p^{n-1}(1-p^2\zeta)}z^n
=:\sum_{n=1}^\infty A_n(\zeta)z^n.
\end{align}

Thus we see that the coefficient region $\{a_n(f): f\in\Co_p\}$
contains the set $A_n(\bD)=\{A_n(\zeta): \zeta\in\bD\}.$
We note that $A_n(\bD)$ is the closed disk
$|w-(1-p^{2n+2})/p^{n-1}(1-p^4)|\le (p^2-p^{2n})/p^{n-1}(1-p^4).$
Indeed, Avkhadiev and Wirths \cite{AW07} proved the following.

\begin{Thm}
Let $0<p<1$ and $n\ge2.$
Then
$$
\{a_n(f): f\in\Co_p\}
=A_n(\bD)
=\left\{w: \left|w-\frac{1-p^{2n+2}}{p^{n-1}(1-p^4)}\right|
\le \frac{p^2-p^{2n}}{p^{n-1}(1-p^4)}\right\}.
$$
Moreover, for $f\in\Co_p,$ $a_n(f)\in\partial A_n(\bD)$
if and only if $f=F_\zeta$ for some $\zeta\in\partial\D.$
\end{Thm}

Note that for each $\zeta\in\partial\D,$ 
$T_p(p\zeta)=(1+e^{i\theta})p/(1+p^2)$ for some $\theta\in\R$ and vice versa.
In the present paper, we consider the second Hankel determinant of order $2$
for $f(z)=z+a_2z^2+\dots,$ which is defined by
$$
H(f)=\begin{vmatrix} a_2 & a_3 \\ a_3 & a_4 \end{vmatrix}
=a_2a_4-a_3^2.
$$
Especially, we will take a closer look at the variability region
$H(\Co_p)=\{H(f): f\in\Co_p\}$ for $0<p<1.$
The second Hankel determinant of general order was studied by
Pommerenke \cite{Pom66} and Hayman \cite{Hayman68}
and many others in recent years.
A straightforward computation yields
$$
H(F_\zeta)=A_2(\zeta)A_4(\zeta)-A_3(\zeta)^2
=-\frac{(1-p^2)^2\zeta}{(1-p^2\zeta)^2}
=-\frac{(1-p^2)^2}{p^2}K(p^2\zeta),
$$
where
$$
K(z)=\frac z{(1-z)^2}
$$
is the Koebe function.
Set
\begin{equation}\label{eq:Op}
\Omega_p=\{H(F_\zeta): |\zeta|\le 1\}
=-\frac{(1-p^2)^2}{p^2}K(p^2\bD).
\end{equation}
This set has the following property.

\begin{prop}\label{prop:Omega}
$\Omega_p\subset\Omega_q$ for $0<q<p<1$ and
$$
\bigcup_{0<p<1}\Omega_p=\D\cup\{-1\}
\aand
\bigcap_{0<p<1}\Omega_p=\{-(1+z)^2/4: |z|\le 1\}.
$$
\end{prop}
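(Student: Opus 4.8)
I would work throughout with the variable $t=p^2\in(0,1)$ and the univalent function
$$
g_t(\zeta)=-\frac{(1-t)^2\zeta}{(1-t\zeta)^2}=-\frac{(1-t)^2}{t}\,K(t\zeta),
$$
so that $\Omega_p=g_t(\bD)$. Since $K$ is univalent on $\D$ and $t\zeta$ carries $\bD$ into the compactly contained disk $\{|u|\le t\}$, the map $g_t$ is univalent on $\bD$ and $\Omega_p$ is a compact Jordan region with $0=g_t(0)$ in its interior. A one-line estimate, $(1-t)^2|\zeta|\le|1-t\zeta|^2$ for $|\zeta|\le1$ with equality only at $\zeta=1$, shows $\Omega_p\subseteq\bD$ and $\Omega_p\cap\partial\D=\{g_t(1)\}=\{-1\}$, hence $\Omega_p\subseteq\D\cup\{-1\}$. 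For fixed $w$ the preimages solve the quadratic $wt^2\zeta^2-(2wt-(1-t)^2)\zeta+w=0$, whose root product is $1/t^2>1$; thus at most one root $\zeta(t)$ lies in $\bD$, and $w\in\Omega_p$ exactly when such a root exists.

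The engine for the monotonicity statement is to track this in-disk root as $t$ varies. Fix $w\neq0$; since $1+t\zeta\neq0$ on $\bD$, the implicit function theorem makes $\zeta(t)$ a smooth branch, and a direct differentiation of $g_t(\zeta)=w$ gives $\zeta'(t)/\zeta(t)=2(1-\zeta)/\bigl[(1-t)(1+t\zeta)\bigr]$, whence
$$
\frac{d}{dt}\log|\zeta(t)|^2=\frac{4}{1-t}\,\real\frac{1-\zeta}{1+t\zeta}
=\frac{4}{(1-t)|1+t\zeta|^2}\bigl(1-(1-t)\real\zeta-t|\zeta|^2\bigr).
$$
For $|\zeta|\le1$ the bracket is $\ge t-t|\zeta|^2=t(1-|\zeta|^2)\ge0$, so $|\zeta(t)|$ is non-decreasing. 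Consequently, if $w\in\Omega_p$ (the root lies in $\bD$ at $t=p^2$) then lowering $t$ only shrinks $|\zeta(t)|$, so the root stays in $\bD$; this yields $\Omega_p\subset\Omega_q$ for $0<q<p<1$.

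For the union, as $t\to0$ we have $g_t\to-\id$ uniformly on $\bD$; by Hurwitz's theorem every $w\in\D$ satisfies $w\in g_t(\D)\subseteq\Omega_p$ once $t$ is small. Since also $-1\in\Omega_p$ for all $p$ while $\Omega_p\subseteq\D\cup\{-1\}$, taking the union gives exactly $\D\cup\{-1\}$; note this step does not even require the nesting.

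The intersection is the delicate part, and the analysis of the two roots as $t=1-\varepsilon\to1$ is where the real work lies. Expanding the quadratic I expect $\zeta_\pm\approx 1+\varepsilon\bigl(1\mp 1/\sqrt{-w}\bigr)$, so $|\zeta_\pm|^2\approx 1+2\varepsilon\bigl(1\mp\real(1/\sqrt{-w})\bigr)$; with the branch $\real\sqrt{-w}\ge0$ one has $\real(1/\sqrt{-w})\ge0$, and hence for $t$ near $1$ an in-disk root exists if and only if $\real(1/\sqrt{-w})\ge1$. Writing $\sqrt{-w}=(1+z)/2$ one checks that this half-plane condition is precisely $w=-(1+z)^2/4$ with $|z|\le1$, i.e. membership in the cardioid region $C$. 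This gives $\bigcap_p\Omega_p\subseteq C$ immediately (if $w\notin C$, then either $w\notin\bD$ or $\real(1/\sqrt{-w})<1$, and in both cases $w\notin\Omega_p$ for $t$ close to $1$). For the reverse inclusion I would combine the same expansion with the monotonicity of $|\zeta(t)|$: for $w$ in the open interior of $C$ the expansion forces $|\zeta(t)|<1$ near $t=1$, and monotonicity propagates this to all $t\in(0,1)$, so $w\in\Omega_p$ for every $p$; since $\bigcap_p\Omega_p$ is closed and $C$ is the closure of its interior, this upgrades to $C\subseteq\bigcap_p\Omega_p$. The main obstacle is making the $t\to1$ root asymptotics rigorous with controlled error and matching them \emph{exactly} to the cardioid $\partial C=\{-(1+e^{i\alpha})^2/4\}$; everything else reduces to the sign computation in the second paragraph.
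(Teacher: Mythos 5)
Your proposal is correct, but it takes a genuinely different route from the paper's proof. The paper first reparametrizes: via the M\"obius substitution $\sigma=[\zeta,p^2]$ (Lemma \ref{lem:Omega}), it writes $\Omega_p=f_t(\bD)$ with $f_t(z)=-t^{-1}\big[1+(t-2)z+z^2\big]$, $t=P^2\in(4,\infty)$, proves the nesting by flowing the \emph{image} boundary curves $\gamma_t(\theta)=f_t(e^{i\theta})$ and checking the deformation is outward-pointing ($\Re\big[(\gamma_t'(\theta)/i)/\dot\gamma_t(\theta)\big]>0$), and then gets both the union and the intersection for free, because in that parametrization the family has non-degenerate uniform limits at both ends: $f_t\to-z$ as $t\to\infty$ and $f_t\to-(1+z)^2/4$ as $t\to4$, so the cardioid is simply the image of the limit map. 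You instead work with the raw form \eqref{eq:Op}, $g_t(\zeta)=-(1-t)^2\zeta/(1-t\zeta)^2$ with $t=p^2$, and prove the nesting by flowing the \emph{preimage} root $\zeta(t)$ of a fixed $w$ --- the dual of the paper's argument; your formula $\zeta'/\zeta=2(1-\zeta)/[(1-t)(1+t\zeta)]$ and the sign of $1-(1-t)\real\zeta-t|\zeta|^2$ are correct, and the branch continuation is legitimate since the only critical point of $g_t$ and the only root collision occur at $\zeta=-1/t\notin\bD$. The cost of your parametrization shows up in the intersection: $g_t\to0$ degenerately as $t\to1$, with all the action concentrating at $\zeta=1$, which forces you into local root asymptotics, the identification of $\{\Re(1/\sqrt{-w})\ge1\}$ with the cardioid region (correct: $\Re(1/u)\ge1\iff|u-\tfrac12|\le\tfrac12$, i.e.\ $u=(1+z)/2$, $|z|\le1$), and an interior-plus-closure argument --- precisely the degeneracy the paper's change of variable removes. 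Two remarks on rigor. First, the ``main obstacle'' you flag is not actually one: by your own closure argument only the strict-inequality cases are needed, and for fixed $w\ne0$ the roots are explicit algebraic functions of $\varepsilon=1-t$, so $\zeta_\pm=1+\varepsilon\big(1\mp1/\sqrt{-w}\big)+O(\varepsilon^2)$ is automatic, with no uniformity required. Second, the monotonicity step deserves one more line: your radial differential inequality degenerates exactly on $|\zeta|=1$, so for $w\in g_{t_0}(\partial\D)$ either note that crossings of the unit circle are strictly outward in $t$ unless $\zeta=1$ (i.e.\ $w=-1$, which is trivially in every $\Omega_p$), or simply approximate $w$ from $g_{t_0}(\D)$ and use compactness of $g_t(\bD)$.
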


Note that the set $\{-(1+z)^2/4: |z|\le 1\}$ is a closed Jordan domain, bounded by a cardioid with an inward-pointing cusp at the origin.

By the above observations, we have $\Omega_p\subset H(\Co_p).$
In view of the coefficient regions of $a_n$ for $\Co_p,$
one might suspect that $H(\Co_p)=\Omega_p$ for $0<p<1$
and, in particular, $H(\Co_p)\subset\bD.$
This is, however, not the case.
To state our result, we set
$$
M(p)=\sup\{|H(f)|: f\in\Co_p\}.
$$

\begin{thm}\label{thm:main}
Let $0<p<1.$
Then $M(p)>1.$
Moreover,
$$
\frac1{3p}<M(p)<\frac1{3p}+\frac23.
$$
\end{thm}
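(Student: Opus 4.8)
The plan is to parametrize $\Co_p$ through the Schwarz-type function $\varphi\in\B_p$ of Theorem~\ref{Thm:O}, reduce $H$ to an explicit polynomial in the first three Taylor coefficients of $\varphi$, and then solve an extremal problem over the coefficient body of $\B_p$. Writing $\varphi(z)=c_0+c_1z+c_2z^2+\cdots$ and expanding \eqref{eq:f'}, the factor $\exp\int_0^z-2\varphi/(1-t\varphi)\,dt$ has initial coefficients $-2c_0,\ c_0^2-c_1,\ \tfrac23(c_0c_1-c_2)$, while the rational factor contributes $1,\ 2s,\ 3s^2-2,\ 4s^3-6s$ with $s=p+p^{-1}$. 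Multiplying and reading off $a_2,a_3,a_4$, one gets $H=a_2a_4-a_3^2$ as a polynomial in $c_0,c_1,c_2$ whose coefficients are polynomials in $s$; I would record it and check it against the two known points $\varphi\equiv p$ (giving $H=0$) and $\varphi=\id$ (giving $H=-1$). Crucially the top-order-in-$s$ part cancels, so $H$ is only of size $O(1/p)$, which is the source of all the delicacy below.

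The second step is the coefficient body $\{(c_0,c_1,c_2):\varphi\in\B_p\}$. Since $(\varphi-p)/(1-p\varphi)$ is a self-map of $\D$ vanishing at $p$, Schwarz--Pick division gives $(\varphi-p)/(1-p\varphi)=B_p(z)\,\omega(z)$ with $B_p(z)=(z-p)/(1-pz)$ and $\omega$ in the Schur class, whence $\varphi=(p+B_p\omega)/(1+pB_p\omega)$. Expanding makes $c_0,c_1,c_2$ explicit rational functions of the first Schur coefficients $\gamma_0,\gamma_1,\gamma_2$ of $\omega$, which range freely over the classical coefficient body described by the Schur parameters. A short expansion in $p$ yields $c_0=p(1-\gamma_0)+O(p^3)$, $c_1=\gamma_0-p\gamma_1+O(p^2)$, $c_2=\gamma_1+O(p)$, and substitution produces the normal form $H=-\gamma_1/(3p)-\gamma_0+\gamma_2/6+O(p)$; this already pins the scale $1/(3p)$ and shows that the leading contribution comes from $|\gamma_1|\le1$.

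For the upper bound I would keep the exact expression and exploit that $H$ is affine in $c_2$ and quadratic in $c_1$. Maximizing $|H|$ over the body, one pushes $c_2$ to the boundary of its admissible disk, then $c_1$, then $c_0$, so that an extremal $\omega$ is a finite Blaschke product and the supremum reduces to a single complex variable $\gamma_0$. Feeding in the Schur inequalities $|\gamma_1|\le1-|\gamma_0|^2$ and the disk for $\gamma_2$, and aligning phases, the optimal $|\gamma_0|$ turns out to be of order $p$; estimating the resulting one-variable maximum together with the $O(p)$ and $O(1)$ remainders uniformly in $p$ yields $M(p)<\tfrac1{3p}+\tfrac23$. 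The constant $\tfrac23$ is in fact forced: since $\tfrac1{3p}+\tfrac23>1$ exactly for $p<1$ while $\tfrac1{3p}\to\tfrac13$ as $p\to1^-$, once $M(p)>1$ is known any valid additive constant must be at least $\tfrac23$, so the estimate is sharp in that limit.

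For the lower bound I would first test the explicit family $\omega_r(z)=(r+z)/(1+rz)$, $0<r<1$ (for which $\varphi$ is a degree-two self-map), where $\gamma_0=r$, $\gamma_1=1-r^2$, $\gamma_2=-r(1-r^2)$ are phase-aligned in the normal form; computing $H$ exactly in $(p,r)$ and optimizing over $r$ gives $M(p)>\tfrac1{3p}$, which already secures $M(p)>1$ for $p<\tfrac13$. For the remaining range I would prove $M(p)>1$ by a first-variation argument at the extremal map $\varphi=\id$, where $H=-1$: perturbing $\gamma_0$ inward from the boundary point $1$ and taking a degree-two Blaschke datum with $s_1$ close to $1$ and $s_2=-1$, so that $\operatorname{Re}$ of the first variation of $H$ is negative and hence $|H|$ strictly exceeds $1$. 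The main obstacle is precisely the regime $p\to1^-$: there both bounds pinch toward $1$, the cone of admissible increasing directions at $\id$ degenerates (the degree-one family $\omega_r$ only approaches $|H|=1$ from below, so a genuinely two-parameter perturbation is needed), and the near-total cancellation in $H$ — a difference of two quantities of size $1/p^4$ producing only an $O(1/p)$ result — means the decisive gain of $|H|$ over its competitors is of the same order as the error terms. Consequently the whole argument hinges on an exact and carefully bookkept use of the coefficient-body description rather than on any asymptotic shortcut.
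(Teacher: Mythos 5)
Your skeleton is essentially the paper's: express $H$ as a polynomial in the first three Taylor coefficients of $\varphi\in\B_p$ (your expansion of \eqref{eq:f'} agrees with \eqref{eq:a} and \eqref{eq:H}), describe the coefficient body of $\B_p$ exactly by Schur-type parameters (your factorization $-T_p\circ\varphi=B_p\omega$ is a legitimate alternative to the paper's conjugation $\psi=T_p\circ\varphi\circ T_p$ followed by Dieudonn\'e's lemma and its second-order analogue, Lemma \ref{lem:CKS}, which yield Theorem \ref{thm:X2} and the exact formula \eqref{eq:Phi}), prove $M(p)>1$ by a variation at $\varphi=\id$ along a two-parameter Blaschke direction, get $M(p)>1/(3p)$ from an explicit one-parameter family, and get the upper bound by extremizing over the body. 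The genuine gap is that at every point where the proof has to close, you assert instead of prove. The crux of $M(p)>1$ is a single sign valid for \emph{every} $p\in(0,1)$: in the paper this is the explicit computation $h_p(1)=1$ and $h_p'(1)=-2(P-2)(P+1)/3P<0$ for the family $(\sigma_0,\sigma_1,\sigma_2)=(t,-1,0)$, whereas you only say that $\operatorname{Re}$ of the first variation ``is negative'' --- precisely the statement that needs proof, and, as you yourself note, the cone of good directions degenerates as $p\to1^-$, so it cannot be waved through. The same applies to ``optimizing over $r$ gives $M(p)>1/(3p)$'' (the paper needs the specific evaluation at $t=7/(4P)$ and a polynomial positivity check) and to the final upper-bound maximum, which you state ``turns out'' to give $1/(3p)+2/3$.

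Two of your concrete mechanisms are also unsound. First, the announced workhorse $H=-\gamma_1/(3p)-\gamma_0+\gamma_2/6+O(p)$ is a small-$p$ asymptotic; it cannot yield either bound uniformly, because as $p\to1^-$ the two bounds pinch together ($1/(3p)+2/3\downarrow1$ while $M(p)>1$), so the $O(p)$ and $O(1)$ remainders are larger than the quantities being compared. You concede this and retreat to ``exact bookkeeping,'' but that exact computation is the entire content of the paper's Sections 2--3 and is never supplied. Second, the upper-bound reduction (``push $c_2$ to the boundary, then $c_1$, then $c_0$, so the supremum reduces to a single complex variable $\gamma_0$'') is invalid reasoning: once $c_2$ is replaced by its extremal boundary value, $|H|$ becomes the modulus of an affine part plus a radius depending on $|c_1|$ through the factor $1-|\sigma_1|^2$; this is no longer the modulus of a function analytic in $c_1$, so the maximum principle does not push $c_1$ to the boundary, and the problem does not collapse to one variable. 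The paper does something different and cruder: it bounds $|\Phi_p|$ by the triangle inequality, verifies the one sign condition $B_2-B_3\le0$, sets $|\sigma_1|=1$, and finishes with the elementary inequalities $2t\le1+t^2$ and $6t^3-3t^4\le3t^2$ --- a deliberately non-sharp estimate, making no claim about true extremals, yet strong enough to give $M(p)\le(P^2+2P-2)/(3P)<1/(3p)+2/3$.
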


In Section 3, we will prove the above proposition and the theorem.
Indeed, we give a description of the variability region of $H(f)$
for $f\in\Co_p$ in Proposition \ref{prop:Phi} below.
As a preliminary, we give an explicit form of the coefficient body
of order 2 for the class $\B_p$ in Section 2.
Our basic idea is to employ an higher-order analogue of Dieusonn\'e's lemma.

\section{Higher-order analogue of Dieudonn\'e's lemma and its application}

We expand a function $\varphi\in\B_p$ in the form
\begin{equation}\label{eq:phi}
\varphi(z)=c_0+c_1z+c_2z^2+\dots,
\quad |z|<1.
\end{equation}
Then the early coefficients of the function $f(z)=z+a_2z^2+a_3z^3+\dots$
determined by \eqref{eq:f'} are given by
\begin{align}
\notag
a_2&=P-c_0, \\
\label{eq:a}
a_3&=P^2+\frac{-c_1+c_0^2-4Pc_0-2}3, \\
\notag
a_4&=P^3+\frac{-c_2+c_0c_1+6c_0-9P-9P^2c_0+3Pc_0^2-3Pc_1}6,
\end{align}
where we put
$$
P=p+\frac1p=\frac{1+p^2}p.
$$
By making use of this type of relations, a coefficient problem
for $\Co_p$ reduces in principle to that of $\B_p.$
Based on this idea, in \cite{OS15a}, the authors solved the extremal problem
on $|a_3-\mu a_2^2|$ for a function $f(z)=z+a_2z^2+a_3z^3+\dots$ in 
$\Co_p$ and a real constant $\mu.$
The key ingredient in \cite{OS15a} was the determination of the coefficient
body $\X_1(\B_p)$ of order $1$ for $\B_p.$
Here, for $n\ge0,$ the coefficient body $\X_n(\F)$ of order $n$ for a
class $\F$ of analytic functions at the origin is defined by
\begin{align*}
\X_n(\F)=&\{(c_0,c_1,\dots,c_n)\in\C^{n+1}: \\
&\quad \quad \varphi(z)=c_0+c_1z+\dots +c_nz^n+O(z^{n+1}) \text{ for some } \varphi\in\F\}.
\end{align*}

The goal of the present section is to show the following description
of the coefficient body $\X_2(\B_p)$ of order 2.

\begin{thm}\label{thm:X2}
Let $0<p<1.$
A triple $(c_0,c_1,c_2)$ of complex numbers is contained in the
coefficient body $\X_2(\B_p)$ if and only if
$$
c_0=P\inv (1-\sigma_0),
\aand
c_1=P^{-2}\big[1+(P^2-2)\sigma_0+\sigma_0^2\big]+P\inv(1-|\sigma_0|^2)\sigma_1
$$
and
\begin{align*}
c_2&=P^{-3}(1-\sigma_0)\big[1+(P^2-2)\sigma_0+\sigma_0^2\big]
-P^{-2}(P^2-2+2\sigma_0)(1-|\sigma_0|^2)\sigma_1 \\
&\qquad
+\varepsilon P\inv(1-|\sigma_0|^2)\overline{\sigma_0}\sigma_1^2
+P\inv(1-|\sigma_0|^2)(1-|\sigma_1|^2)\sigma_2
\end{align*}
for some $\sigma_0,\sigma_1,\sigma_2\in\bD,$
where $P=(1+p^2)/p>2$ and $\varepsilon=|1+p^2\sigma_0|/(1+p^2\sigma_0)
\in\partial\D.$
\end{thm}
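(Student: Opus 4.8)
The plan is to prove Theorem~\ref{thm:X2} by an iterated Schur--Pick reduction that serves as the higher-order analogue of Dieudonn\'e's lemma alluded to in the section title: at orders $j=0,1,2$ we successively peel off one parameter $\sigma_j$, each constrained only by $|\sigma_j|\le1$. Write $B_a(w)=(w-a)/(1-\bar aw)$, so that $B_a(w)=[w,a]$ in the paper's notation. First I would pin down $c_0=\varphi(0)$. Since $\varphi\in\B_p$ satisfies $\varphi(p)=p$, the Schwarz--Pick inequality for the pair $0,p$ gives $|[c_0,p]|\le[0,p]=p$, and conversely every such $c_0$ is attained. Hence $c_0$ fills a pseudo-hyperbolic disk; being the image of a disk under a M\"obius map, this is a Euclidean disk, and computing its centre and radius (both $P^{-1}$) shows that $c_0=P^{-1}(1-\sigma_0)$ with $\sigma_0\in\bD$ is a bijective affine parametrization of it.

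Next I would carry out the Schur step. With $c_0$ fixed, put $\varphi_1(z)=z^{-1}B_{c_0}(\varphi(z))$; since $B_{c_0}(\varphi(0))=0$, Schwarz's lemma makes $\varphi_1$ a self-map of $\D$ again. Inverting $\varphi(z)=B_{c_0}^{-1}(z\varphi_1(z))$ and expanding at the origin gives
$$
c_1=(1-|c_0|^2)\,d_0,\qquad c_2=(1-|c_0|^2)\big(d_1-\bar c_0\,d_0^2\big),
$$
where $d_0,d_1$ are the first Taylor coefficients of $\varphi_1$. The constraint becomes $\varphi_1(p)=q_1:=p^{-1}B_{c_0}(p)$, and a short computation yields $q_1=(\sigma_0+p^2)/(1+p^2\bar\sigma_0)$ with $|q_1|\le1$, equality holding iff $|\sigma_0|=1$. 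Schwarz--Pick for $\varphi_1$ then forces $d_0$ into the Euclidean disk $\{\,d_0:|[d_0,q_1]|\le p\,\}$, which I parametrize affinely by $\sigma_1\in\bD$ with positive real radius. Substituting and using the identities that $1-|c_0|^2$ times this radius equals $P^{-1}(1-|\sigma_0|^2)$, while $1-|c_0|^2$ times the centre equals $P^{-2}[1+(P^2-2)\sigma_0+\sigma_0^2]$, reproduces the stated formula for $c_1$.

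A third reduction of the same shape, applied to $\varphi_1$, peels off $d_0$, turns the relation into $d_1=(1-|d_0|^2)\,e_0$ with $e_0=\varphi_2(0)$ ranging over $\{\,e_0:|[e_0,q_2]|\le p\,\}$, $q_2:=p^{-1}B_{d_0}(q_1)$, and introduces $\sigma_2\in\bD$ through the positive real radius of this last disk. For the converse I would run the three steps backwards, realizing any admissible triple by solving the associated finite Pick interpolation problems; in the degenerate cases $|\sigma_j|=1$ the reconstructed function is forced to be a finite Blaschke product, and the factors $1-|\sigma_j|^2$ multiplying the higher parameters vanish, so the formulas stay consistent.

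The hard part will be the assembly of $c_2$, and especially the emergence of the unimodular factor $\varepsilon=|1+p^2\sigma_0|/(1+p^2\sigma_0)$. The subtlety is that the centre and radius of the $e_0$-disk depend on $d_0$, hence on $\sigma_1$, through $q_2$; consequently the coefficient of $\sigma_1^2$ in $c_2$ receives contributions both from $-\bar c_0\,d_0^2$ and from $(1-|d_0|^2)\,e_0$, and only after these are combined does the argument of $1+p^2\sigma_0$ consolidate into the single factor $\varepsilon$. Pushing the three nested affine parametrizations through the substitution and verifying the cancellations that collapse everything into the stated closed form is the principal computational obstacle; conceptually, all the content sits in the three Schwarz--Pick disks and their affine parametrizations.
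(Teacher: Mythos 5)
Your proposal is correct in substance but follows a genuinely different route from the paper. The paper first conjugates: it sets $\psi=T_p\circ\varphi\circ T_p$, so that $\psi(0)=0$ and the coefficient data $(c_0,c_1,c_2)$ of $\varphi$ at the origin become the jet $(\psi(p),\psi'(p),\psi''(p)/2)$ of $\psi$ at $p$; it then invokes Dieudonn\'e's lemma and its second-order analogue from \cite{CKS12} (Lemma \ref{lem:CKS}) to parametrize that jet by nested disks $(w_0,w_1,w_2)\in\bD^3$ (Lemma \ref{lem:X2}), and finally rotates parameters ($\sigma_0=[w_0,p^2]$, with $\sigma_1,\sigma_2$ unimodular multiples of $w_1,w_2$) to make the disk radii positive, which is exactly where $\varepsilon$ enters. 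You instead stay at the origin throughout: the Schur step $\varphi_1(z)=z^{-1}B_{c_0}(\varphi(z))$ peels off one Taylor coefficient at a time, the condition $\varphi(p)=p$ is transported into the moving interpolation conditions $\varphi_1(p)=q_1$ and $\varphi_2(p)=q_2$, and each new coefficient is confined to a two-point Schwarz--Pick disk whose affine parametrization with positive radius furnishes $\sigma_j$; surjectivity comes from two-point Pick interpolation rather than from the paper's explicit construction $\psi(z)=z\omega([z,p])$. Your checkable intermediate claims are right: the $c_0$-region is $|c_0-P^{-1}|\le P^{-1}$; $q_1=(\sigma_0+p^2)/(1+p^2\overline{\sigma_0})$ with $|q_1|=1$ exactly when $|\sigma_0|=1$; and $(1-|c_0|^2)$ times the centre and radius of the $d_0$-disk are indeed $P^{-2}\big[1+(P^2-2)\sigma_0+\sigma_0^2\big]$ and $P^{-1}(1-|\sigma_0|^2)$, so your $\sigma_1$ is the same parameter as the theorem's and the stated $c_1$ follows. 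What your route buys is self-containedness (only Schwarz--Pick and finite interpolation, no appeal to \cite{CKS12}) and transparency of the nested-disk structure; what it costs is heavier algebra at the last stage, since the centre of the $c_2$-disk computed your way a priori involves $\overline{\sigma_1}$ (through $1-|d_0|^2$ and through $q_2$), and one must verify that these antiholomorphic contributions cancel, leaving the expression of the theorem with the single unimodular factor $\varepsilon$. One caution there: that final identification must actually be computed, not inferred from the observation that your disk and the theorem's disk are the same set (which would presuppose the very statement being proved); but this is routine algebra of the same kind that the paper itself leaves to the reader in its proof of Theorem \ref{thm:X2}.
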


Let us now recall Dieudonn\'e's lemma 
(see, for instance, \cite[p.~198]{Duren:univ}).

\begin{lem}[Dieudonn\'e's lemma]
Let $z_0, \tau_0\in\D$ with $|\tau_0|\le|z_0|\ne0.$
Then the variability region of $\tau_1=\psi'(z_0)$ for $\psi\in\End(\D)$
with $\psi(0)=0, \psi(z_0)=\tau_0$ is the closed disk given by
\begin{equation}\label{eq:D}
\left|\tau_1-\frac{\tau_0}{z_0}\right|\le
\frac{|z_0|^2-|\tau_0|^2}{|z_0|(1-|z_0|^2)}.
\end{equation}
\end{lem}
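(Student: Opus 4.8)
The plan is to reduce the statement to the classical Schwarz and Schwarz--Pick lemmas through the factorization forced by $\psi(0)=0$. First I would write $\psi(z)=zg(z)$; since $\psi\in\End(\D)$ vanishes at the origin, the Schwarz lemma gives $|\psi(z)|\le|z|$, so $g$ is analytic on $\D$ with $g:\D\to\bD$. Evaluating at $z_0$ gives $g(z_0)=\tau_0/z_0=:w_0$, and the hypothesis $|\tau_0|\le|z_0|$ becomes exactly $|w_0|\le1$. Differentiating, $\tau_1=\psi'(z_0)=g(z_0)+z_0g'(z_0)$, so
\[
\tau_1-\frac{\tau_0}{z_0}=z_0\,g'(z_0).
\]
Hence the problem is equivalent to finding the variability region of $g'(z_0)$ over all analytic $g:\D\to\bD$ with $g(z_0)=w_0$, and then scaling by $z_0$.

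For the degenerate case $|w_0|=1$ (that is, $|\tau_0|=|z_0|$), the maximum modulus principle forces $g$ to be the unimodular constant $w_0$, so $g'(z_0)=0$ and the claimed disk collapses to the point $\tau_0/z_0$, consistent with the radius vanishing. When $|w_0|<1$, the same principle shows every admissible $g$ maps into the open disk, so $g\in\End(\D)$ with $g(z_0)=w_0$. I would then conjugate by the involutions of the paper, setting $h=T_{w_0}\circ g\circ T_{z_0}$, which lies in $\End(\D)$ and satisfies $h(0)=T_{w_0}(g(z_0))=0$.

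The key computation is the chain rule at the origin. From $T_a'(z)=-(1-|a|^2)/(1-\bar az)^2$ one reads off $T_{w_0}'(0)=-(1-|w_0|^2)$ and $T_{z_0}'(z_0)=-1/(1-|z_0|^2)$, so the identity $g=T_{w_0}\circ h\circ T_{z_0}$ yields
\[
g'(z_0)=T_{w_0}'(0)\,h'(0)\,T_{z_0}'(z_0)=\frac{1-|w_0|^2}{1-|z_0|^2}\,h'(0).
\]
By the Schwarz lemma $h'(0)$ ranges over the full closed unit disk $\bD$, the value $c\in\bD$ being realized by $h(z)=cz$, and each such $h$ comes from an admissible $g$. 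Therefore $g'(z_0)$ sweeps out the closed disk centered at $0$ of radius $(1-|w_0|^2)/(1-|z_0|^2)$; scaling by $z_0$ and substituting $|w_0|^2=|\tau_0|^2/|z_0|^2$ converts this into precisely the disk \eqref{eq:D}.

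The step I expect to be most delicate is showing that the variability region is the \emph{entire} closed disk rather than just the Schwarz--Pick bound on $|\tau_1-\tau_0/z_0|$; this surjectivity is exactly what the conjugation trick buys, by transporting the full freedom of $h'(0)\in\bD$ back to $g'(z_0)$. A secondary point requiring care is verifying, in the non-degenerate case, that admissible $g$ genuinely map into the open disk, so that composition with $T_{w_0}$ on the left is legitimate.
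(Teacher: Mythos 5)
Your proof is correct. Note that the paper itself does not prove this lemma at all---it is quoted as a classical result with a citation to Duren's book---so there is no in-paper argument to compare against; your write-up supplies the standard proof: factor $\psi(z)=zg(z)$ via the Schwarz lemma, reduce to the variability of $g'(z_0)$ for self-maps with $g(z_0)=\tau_0/z_0$, and obtain both the Schwarz--Pick bound and the surjectivity onto the full closed disk by conjugating with the involutions $T_{z_0}$, $T_{w_0}$ and letting $h'(0)$ range over $\bD$. The two points you flagged as delicate (exhausting the whole disk, and that $g$ maps into the open disk when $|w_0|<1$) are indeed the only subtleties, and you handle both correctly.
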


We remark that equality holds in Dieudonn\'e's lemma if and only if
$\psi$ is a finite Blaschke product of degree at most 2 
(cf.~\cite[Theorem 3.6]{CKS12}).
The following result can be regarded as Dieudonn\'e's lemma of the second order
(see \cite[Theorem 3.7]{CKS12}).

\begin{lem}\label{lem:CKS}
Let $z_0,\tau_0\in\D$ with $|\tau_0|<|z_0|\ne0$
and suppose that $\tau_1\in\C$ satisfies \eqref{eq:D}.
Then the variability region of $\tau_2=\psi''(z_0)/2!$ for
$\psi\in\End(\D)$ with $\psi(0)=0, \psi(z_0)=\tau_0$ and $\psi'(z_0)=\tau_1$
is the closed disk described by
$$
\left|
\tau_2-\frac{\tau_1-\tau_0/z_0}{z_0(1-|z_0|^2)}
+\frac{\overline{\tau_0}(\tau_1-\tau_0/z_0)^2}{|z_0|^2-|\tau_0|^2}
\right|
+\frac{|z_0||\tau_1-\tau_0/z_0|^2}{|z_0|^2-|\tau_0|^2}
\le \frac{|z_0|(1-|\tau_0/z_0|^2)}{(1-|z_0|^2)^2}.
$$
\end{lem}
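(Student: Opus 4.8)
The plan is to solve this second-order interpolation problem by two successive Schur reductions, peeling off first the constraint at the origin and then the data at $z_0$, after which the only remaining freedom is controlled by the ordinary Schwarz--Pick inequality. The guiding principle is that each reduction is a \emph{bijection} between admissible families, so that sharpness is automatic and every point of the resulting disk is attained.

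First I would remove the condition $\psi(0)=0$. By the Schwarz lemma the function $\psi_1(z)=\psi(z)/z$ again lies in $\End(\D)$, and the prescribed jet transfers to $z_0$ as $\psi_1(z_0)=\tau_0/z_0=:w_0$, $\psi_1'(z_0)=(\tau_1-w_0)/z_0=:s_1$, and $\psi_1''(z_0)/2=:s_2$, with the crucial linear relation $\tau_2=s_1+z_0s_2$. The hypothesis $|\tau_0|<|z_0|$ guarantees $|w_0|<1$, so $w_0$ is an \emph{interior} value; this is precisely the place where the strict inequality (rather than $\le$) is needed, since the next step uses the Möbius automorphism fixing no boundary point. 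Next I would carry out the Schur--Pick reduction at $z_0$: with $b(z)=(z-z_0)/(1-\overline{z_0}z)$ and $M(w)=(w-w_0)/(1-\overline{w_0}w)$, the function $\psi_2=(M\circ\psi_1)/b$ again belongs to $\End(\D)$, and conversely $\psi_1=M^{-1}(b\,\psi_2)$ reconstructs an admissible $\psi_1$ from any $\psi_2\in\End(\D)$. Expanding the identity $M\circ\psi_1=\psi_2\cdot b$ in powers of $z-z_0$ and matching the first two coefficients expresses $u_0:=\psi_2(z_0)$ and $u_1:=\psi_2'(z_0)$ affinely in terms of $s_1,s_2$; one finds $u_0=(1-|z_0|^2)s_1/(1-|w_0|^2)$, which depends only on the fixed datum $\tau_1$. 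The requirement $|u_0|\le1$ then reproduces exactly Dieudonné's inequality \eqref{eq:D}, confirming consistency with the standing hypothesis on $\tau_1$.

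Because $u_0$ is pinned down by $\tau_1$, the correspondence $\psi\mapsto\psi_2$ is a bijection from the admissible family onto $\{\psi_2\in\End(\D):\psi_2(z_0)=u_0\}$. Over this last family, the sharp Schwarz--Pick inequality says that $u_1=\psi_2'(z_0)$ ranges over exactly the closed disk $|u_1|\le(1-|u_0|^2)/(1-|z_0|^2)$, every point being attained. Since $u_1$ is an affine function of $s_2$ and $\tau_2=s_1+z_0s_2$ is affine in $s_2$, the variability region of $\tau_2$ is the affine image of this disk, hence again a closed disk. Its centre works out (after substituting $s_1=(\tau_1-\tau_0/z_0)/z_0$ and using $\overline{w_0}/(z_0(1-|w_0|^2))=\overline{\tau_0}/(|z_0|^2-|\tau_0|^2)$) to $\dfrac{\tau_1-\tau_0/z_0}{z_0(1-|z_0|^2)}-\dfrac{\overline{\tau_0}(\tau_1-\tau_0/z_0)^2}{|z_0|^2-|\tau_0|^2}$, matching the quantity subtracted inside the modulus.

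The main obstacle is the bookkeeping in this final step. Using $1-|w_0|^2=(|z_0|^2-|\tau_0|^2)/|z_0|^2$, the Schwarz--Pick radius for $u_1$ multiplied by $|z_0|$ splits into two pieces, $\dfrac{|z_0|^2-|\tau_0|^2}{|z_0|(1-|z_0|^2)^2}-\dfrac{|z_0|\,|\tau_1-\tau_0/z_0|^2}{|z_0|^2-|\tau_0|^2}$, the first of which equals the right-hand side $|z_0|(1-|\tau_0/z_0|^2)/(1-|z_0|^2)^2$. Transposing the second piece to the left-hand side converts the plain inequality $|\tau_2-\text{centre}|\le\text{radius}$ into the stated form with the additive term $|z_0|\,|\tau_1-\tau_0/z_0|^2/(|z_0|^2-|\tau_0|^2)$ on the left. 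As a consistency check, when $\tau_1$ lies on the boundary of the Dieudonné disk one has $|u_0|=1$, the Schwarz--Pick disk degenerates to a point, and the additive term equals the right-hand side, forcing $\tau_2$ to a single value --- exactly as it should be.
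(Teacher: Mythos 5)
Your argument is correct, and it actually supplies more than the paper itself does: the paper gives no proof of this lemma, citing \cite[Theorem 3.7]{CKS12} for the necessity of the inequality and providing only the sufficiency, via the explicit construction $\psi(z)=z\,\omega([z,p])$ with $\omega(z)=[z[w_2z,-w_1],-w_0]$, inside the proof of Lemma \ref{lem:X2}. Your two successive Schur reductions are precisely that construction read backwards: the reconstruction $\psi_1=M^{-1}(b\,\psi_2)$ is the paper's nested pseudo-hyperbolic composition, and choosing $\psi_2$ to be a M\"obius function of $b$ recovers the extremal Blaschke products of degree at most $3$ mentioned after the lemma. I checked the bookkeeping: matching first-order coefficients in $M\circ\psi_1=\psi_2\,b$ gives $u_0=(1-|z_0|^2)s_1/(1-|w_0|^2)$, whose constraint $|u_0|\le1$ is exactly \eqref{eq:D}; matching second-order coefficients and using $\tau_2=s_1+z_0s_2$ yields
$$
\tau_2=\frac{s_1}{1-|z_0|^2}-\frac{z_0\overline{w_0}\,s_1^2}{1-|w_0|^2}
+\frac{z_0(1-|w_0|^2)}{1-|z_0|^2}\,u_1,
$$
whose first two terms are the stated centre (via your identity $\overline{w_0}/\bigl(z_0(1-|w_0|^2)\bigr)=\overline{\tau_0}/(|z_0|^2-|\tau_0|^2)$), and the resulting radius $|z_0|(1-|w_0|^2)(1-|u_0|^2)/(1-|z_0|^2)^2$ splits exactly as you claim into the right-hand side minus the additive term. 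Two minor points: the $u_1$-disk's radius is scaled by $|z_0|(1-|w_0|^2)/(1-|z_0|^2)$, not merely by $|z_0|$ as your phrasing suggests (your displayed split is nevertheless the correct one); and when equality holds in \eqref{eq:D}, so that $|u_0|=1$, the family $\{\psi_2\in\End(\D):\psi_2(z_0)=u_0\}$ is empty unless you enlarge it to all analytic maps $\D\to\bD$ and invoke the maximum principle to force $\psi_2\equiv u_0$ --- which is implicitly what your final consistency check does, and which shows the degenerate value is attained by a degree-$2$ Blaschke product. What your route buys is a self-contained proof of both inclusions simultaneously, with sharpness automatic from the bijectivity of each reduction; what the paper's route buys is brevity, outsourcing the necessity direction to \cite{CKS12}.
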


We remark that equality holds precisely when $\psi$ is a finite Blaschke
product of degree at most 3.
In \cite[Theorem 3.7]{CKS12}, the above inequality is stated as a
necessary condition.
For sufficiency, a construction is given in the proof below.

In order to prove Theorem \ref{thm:X2}, we show a preliminary
form of the characterization of $X_2(\B_p).$

\begin{lem}\label{lem:X2}
Let $0<p<1.$
A triple $(c_0,c_1,c_2)$ of complex numbers is contained in the
coefficient body $\X_2(\B_p)$ if and only if
$$
c_0=\frac{p-pw_0}{1-p^2w_0}
\aand
c_1=\frac{(1-p^2)^2w_0+p(1-p^2)(1-|w_0|^2)w_1}{(1-p^2w_0)^2}
$$
and
\begin{align*}
c_2&=\frac{(1-p^2)}{(1-p^2w_0)^3}\left[
p(1-p^2)(1-w_0)w_0
-(1-p^2)(1+p^2w_0)(1-|w_0|^2)w_1\right. \\
&\qquad\left. +p(\overline{w_0}-p^2)(1-|w_0|^2)w_1^2
+p(1-p^2w_0)(1-|w_0|^2)(1-|w_1|^2)w_2
\right]
\end{align*}
for some $w_0,w_1,w_2\in\bD.$
\end{lem}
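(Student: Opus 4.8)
The plan is to parametrize the function $\varphi\in\B_p$ via the Schur--type reduction at the fixed point $p$, and then apply the first- and second-order Dieudonn\'e lemmas (the classical lemma and Lemma \ref{lem:CKS}) with the base point $z_0=p$. Specifically, since $\varphi(p)=p$, I would consider the auxiliary map $\psi=\varphi$ viewed through its values and derivatives at $z_0=p$, setting $\tau_0=\varphi(p)=p$, $\tau_1=\varphi'(p)$, and $\tau_2=\varphi''(p)/2$. Because $|\tau_0|=p=|z_0|$, the classical Dieudonn\'e lemma degenerates: the disk in \eqref{eq:D} collapses to the single point $\tau_1=\tau_0/z_0=1$, forcing $\varphi'(p)=1$. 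This is the crucial structural fact that the fixed-point condition at an interior point, combined with $|\varphi(p)|=|p|$, does \emph{not} actually pin $\varphi$ down to a rotation (since $p\in\D$, not $\partial\D$); so instead I would not use $z_0=p$ directly but rather pass to a conjugated self-map.

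More precisely, the right move is to compose with the involution $T_p$. Set $\psi=T_p\circ\varphi\circ T_p$, which is an analytic self-map of $\D$ with $\psi(0)=T_p(\varphi(p))=T_p(p)=0$. Thus $\psi\in\End(\D)$ fixes the origin, and the Schur parameters $w_0,w_1,w_2\in\bD$ of $\psi$ are exactly the free data: $w_0=\psi'(0)$ governs the first coefficient, and the higher $w_j$ arise from the successive Schur reductions. The content of Lemma \ref{lem:X2} is then just the explicit translation of the relation $\varphi=T_p\circ\psi\circ T_p$ into power-series coefficients $c_0,c_1,c_2$ of $\varphi$ at the origin. So the first real step is to expand $T_p(z)=(p-z)/(1-pz)$ and its composition $\varphi=T_p\circ\psi\circ T_p$ as a power series about $z=0$, treating $\psi$ through its own expansion $\psi(u)=w_0 u+(\text{higher order})$ about $u=0$, where $u=T_p(z)$.

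The key steps, in order, are: first, record $u=T_p(z)=p-(1-p^2)z-p(1-p^2)z^2-\cdots$ and invert/compose carefully to order $z^2$; second, substitute the Schur expansion of $\psi$, where the degree-one term is $w_0$ and the quadratic and cubic Schur coefficients are dictated by the standard Schur algorithm (each reduction introduces the factor $1-|w_{j}|^2$ and the conjugate $\overline{w_j}$ in the familiar way); third, apply $T_p$ on the outside and read off $c_0,c_1,c_2$ as rational expressions in $p,w_0,w_1,w_2$. The factors $(1-p^2w_0)$ in the denominators should emerge directly from differentiating the outer $T_p$ at the point $\psi(T_p(0))$. To establish that \emph{every} admissible $(c_0,c_1,c_2)$ is realized and no others, I would invoke the Schur--Nevanlinna--Pick characterization: $\psi\in\End(\D)$ vanishing at $0$ corresponds bijectively to free Schur parameters $w_0,w_1,w_2\in\bD$ (with $w_{j+1}$ free in $\bD$ precisely when $|w_j|<1$, and the boundary cases handled by finite Blaschke products), which is exactly the sufficiency direction underlying Lemma \ref{lem:CKS}.

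The main obstacle I expect is the bookkeeping in the composition $T_p\circ\psi\circ T_p$ to second order: one must compose three power series and keep all cross terms through $z^2$, then simplify the resulting rational functions so that the $(1-p^2w_0)$-powers and the $(1-|w_0|^2)$, $(1-|w_1|^2)$ factors appear in the clean form stated. The delicate point is the quadratic Schur coefficient of $\psi$, which contributes the $p(\overline{w_0}-p^2)(1-|w_0|^2)w_1^2$ term in $c_2$: tracking the complex conjugate $\overline{w_0}$ correctly through the outer $T_p$ (whose Taylor coefficients are real in $p$) is where sign and conjugation errors are most likely. Everything else is a matter of organizing the algebra; the conceptual content is entirely carried by the reduction $\varphi\leftrightarrow\psi=T_p\circ\varphi\circ T_p$ and the second-order Dieudonn\'e lemma applied at the origin to $\psi$.
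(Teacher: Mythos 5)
Your conjugation step $\psi=T_p\circ\varphi\circ T_p$ is exactly the right (and the paper's) starting point, but the parametrization you build on it is wrong, and the argument collapses there. Since $T_p(0)=p$, the expansion of $\varphi=T_p\circ\psi\circ T_p$ about $z=0$ is governed by the behavior of $\psi$ near the point $p$, not near the origin: from $T_p(z)=p-(1-p^2)z-p(1-p^2)z^2+O(z^3)$, the coefficients $c_0,c_1,c_2$ are functions of the $2$-jet $\psi(p),\psi'(p),\psi''(p)$ at $p$. Your proposal instead takes as free data the Schur parameters of $\psi$ at the origin ($w_0=\psi'(0)$ and the successive Schur reductions there). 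Those determine only the Taylor jet of $\psi$ at $0$, and \emph{no finite jet of $\psi$ at $0$ determines even the constant term} $c_0=T_p(\psi(p))$: substituting the series $\psi(u)=w_0u+\cdots$ into $u=T_p(z)$ and evaluating at $z=0$ means evaluating at $u=p\neq0$, which requires the entire infinite series. So the ``explicit translation'' you describe cannot terminate at order $2$, and the Schur--Nevanlinna--Pick bijection you invoke parametrizes the wrong object (the coefficient body of $\psi$ at $0$, not $\X_2(\B_p)$); it is also not ``the sufficiency direction underlying Lemma \ref{lem:CKS}'', which concerns two-point data, not the one-point Schur algorithm.

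What is actually needed --- and what the paper does --- is to apply the pointwise lemmas to $\psi$ \emph{at} $z_0=p$: Schwarz's lemma gives $w_0=\psi(p)/p\in\bD$; Dieudonn\'e's lemma with $z_0=p$ gives the disk of admissible $\tau_1=\psi'(p)$, normalized to $w_1\in\bD$; and the second-order Dieudonn\'e lemma (Lemma \ref{lem:CKS}) gives the disk of admissible $\tau_2=\psi''(p)/2$, normalized to $w_2\in\bD$. One then inverts these relations and substitutes into the formulas expressing $c_0,c_1,c_2$ through $\tau_0,\tau_1,\tau_2$. Sufficiency comes not from the Schur algorithm at the origin but from the explicit construction $\psi(z)=z\,\omega([z,p])$ with $\omega(z)=[z[w_2z,-w_1],-w_0]$, together with the separate treatment of the degenerate cases ($\psi$ a rotation, i.e.\ $\varphi=\rho_\zeta$, and $|w_1|=1$), which your sketch omits. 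Finally, your opening paragraph misapplies Dieudonn\'e's lemma to $\varphi$ itself: that lemma requires the function to vanish at the origin, which $\varphi\in\B_p$ in general does not, so no degeneration ``forcing $\varphi'(p)=1$'' occurs; by the Schwarz--Pick lemma one only gets $|\varphi'(p)|\le1$.
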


\begin{proof}
When $\varphi=\rho_\zeta$ for some $\zeta\in\partial\D,$
the coefficients $\alpha_0,\alpha_1,\alpha_2$ are given as
$c_0, c_1, c_2$ with $(w_0,w_1,w_2)=(\zeta,0,0)$ (see \eqref{eq:alpha}).

We next suppose that a function $\varphi(z)=c_0+c_1z+c_2z^2+\dots$ in $\B_p$
is not of the form $\rho_\zeta,~\zeta\in\partial\D.$
Then $\psi=T_p\circ\varphi\circ T_p\in\End(\D)$ satisfies
$\psi(0)=0$ but is not a rotation about $0.$
It is straightforward to check the formulae:
\begin{align*}
\tau_0&:=\psi(p)=T_p(c_0)=\frac{p-c_0}{1-pc_0}, \\
\tau_1&:=\psi'(p)=\frac{c_1}{(1-pc_0)^2}, \\
\tau_2&:=\frac{\psi''(p)}2
=\frac{-(1-pc_0)c_2+pc_1(1-pc_0-c_1)}{(1-p^2)(1-pc_0)^3}.
\end{align*}
Hence,
\begin{align}\notag
c_0&=T_p(\tau_0)=\frac{p-\tau_0}{1-p\tau_0},
\quad (1-pc_0)(1-p\tau_0)=1-p^2, \\
\label{eq:c}
c_1&=(1-pc_0)^2\tau_1=\frac{(1-p^2)^2\tau_1}{(1-p\tau_0)^2}, \\
c_2&=\frac{-(1-p^2)^3(1-p\tau_0)\tau_2
+p(1-p^2)^2\tau_1(1-p\tau_0-\tau_1+p^2\tau_1)}{(1-p\tau_0)^3}.
\notag
\end{align}
By Schwarz's lemma and Dieudonn\'e's lemma (with $z_0=p$), we have
\begin{equation}\label{eq:w01}
w_0:=\frac{\tau_0}p\in\D
\aand
w_1:=\frac{\tau_1-\tau_0/p}{(p^2-|\tau_0|^2)/p(1-p^2)}
=\frac{(1-p^2)(\tau_1-w_0)}{p(1-|w_0|^2)}\in\bD.
\end{equation}
When $|w_1|=1,$ by the remark after Lemma \ref{lem:CKS},
$\psi(z)$ is of the form $z\omega([z,p]),$ where
$\omega(z)=[w_1z,-w_0].$
Then the first three Taylor coefficients of $\psi(z)$ about $z=0$ are given by
$c_0, c_1, c_2$ in \eqref{eq:alpha} with $w_2=0.$

Finally, suppose that $|\omega_1|<1.$
Then, by Lemma \ref{lem:CKS}, we see that
\begin{align}\label{eq:w2}
w_2&:=\left(\tau_2-\frac{\tau_1-\tau_0/p}{p(1-p^2)}
+\frac{\overline{\tau_0}(\tau_1-\tau_0/p)^2}{p^2-|\tau_0|^2}\right)\div
\left(\frac{p(1-|\tau_0/p|^2)}{(1-p^2)^2}
-\frac{p|\tau_1-\tau_0/p|^2}{p^2-|\tau_0|^2}\right) \\
\notag
&~=\frac{(1-p^2)^2\tau_2-(1-|w_0|^2)(1-p\overline{w_0}w_1)w_1}%
{p(1-|w_0|^2)(1-|w_1|^2)}\in\bD.
\end{align}
Here, note that the denominator does not vanish because of $|w_1|<1.$
Conversely, for $w_0, w_1\in\D$ and $w_1\in\bD,$ the function
$\psi(z)=z\omega([z,p])$ fulfills the relations in
\eqref{eq:w01} and \eqref{eq:w2}, where $\omega(z)=[z[w_2z,-w_1],-w_0].$
(Note that this construction shows the sufficiency part of Lemma \ref{lem:CKS}.)
We now obtain
\begin{align*}
\tau_0&=pw_0, \\
\tau_1&=w_0+\frac{p(1-|w_0|^2)w_1}{1-p^2}, \\
\tau_2&=\frac{1-|w_0|^2}{(1-p^2)^2}
\big[(1-p\overline{w_0}w_1)w_1+p(1-|w_1|^2)w_2\big].
\end{align*}
Substitution of these expressions into \eqref{eq:c} proves the lemma.
\end{proof}

\begin{proof}[Proof of Theorem \ref{thm:X2}]
For $w_0, w_1, w_2\in\bD,$ we put
$$
\sigma_0=[w_0,p^2]=\frac{w_0-p^2}{1-p^2w_0},\quad
\sigma_1=\frac{|1-p^2w_0|^2}{(1-p^2w_0)^2}w_1,\quad
\sigma_2=\frac{|1-p^2w_0|^2}{(1-p^2w_0)^2}w_2.
$$
Then $\sigma_j\in\bD$ for $j=0,1,2$ and vice versa.
Noting the elementary relations
$$
w_0=[\sigma_0,-p^2]=\frac{\sigma_0+p^2}{1+p^2\sigma_0},\quad
(1+p^2\sigma_0)(1-p^2w_0)=1-p^4
$$
and
$$
(1-|\sigma_0|^2)|1-p^2w_0|^2=(1-p^4)(1-|w_0|^2),
$$
the formulae of $c_j$ in Lemma \ref{lem:X2}
can be expressed in terms of $\sigma_0,\sigma_1,\sigma_2$
through tedious but straightforward computations.
We finally replace $p+1/p$ by $P$ to prove Theorem \ref{thm:X2}.
\end{proof}

\section{Proof of main results}

By the relations \eqref{eq:a}, we can express $H(f)$ for $f\in\Co_p$ 
in terms of $c_j$'s as follows:
\begin{align}\label{eq:H}
18H(f)&=3(c_0-P)c_2-2c_1^2+(c_0^2-4Pc_0+3P^2-8)c_1 \\
\notag
&\qquad -(c_0^2-Pc_0+1)(2c_0^2-5Pc_0+3P^2+8).
\end{align}
We further substitute the formulae in Theorem \ref{thm:X2} into
\eqref{eq:H} to obtain
\begin{align}\label{eq:Phi}
18 P^3 H(f)
&=
-18P \big[1+(P^2-2)\sigma_0+\sigma_0^2\big] \\
\notag
&\quad +3\big[1-7P^2+2P^4+(3P^2-2)\sigma_0+\sigma_0^2\big]
(1-|\sigma_0|^2)\sigma_1 \\
\notag
&\quad +P \big[2(1-|\sigma_0|^2)
+3\varepsilon\overline{\sigma_0}(P^2-1+\sigma_0)\big](1-|\sigma_0|^2)\sigma_1^2 \\
\notag
&\quad -3P (P^2-1+\sigma_0)(1-|\sigma_0|^2)(1-|\sigma_1|^2)\sigma_2 \\
\notag
&=:\Phi_p(\sigma_0,\sigma_1,\sigma_2),
\end{align}
where $\varepsilon=|1+p^2\sigma_0|/(1+p^2\sigma_0).$
At this stage, we have obtained the following description of the set $H(\Co_p).$

\begin{prop}
\label{prop:Phi}
Let $0<p<1.$
Then the variability region of the second Hankel determinant $H(f)$ of order $2$
for $f\in\Co_p$ is given by
$$
H(\Co_p)=\{\Phi_p(\sigma_0,\sigma_1,\sigma_2)/18P^3: \sigma_0,\sigma_1,\sigma_2\in\bD\}.
$$
\end{prop}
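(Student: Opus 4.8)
The plan is to read Proposition \ref{prop:Phi} as a pure bookkeeping consequence of the two structural results already established, so that no new extremal analysis is required. The only assertion is the set equality
$$
H(\Co_p)=\{\Phi_p(\sigma_0,\sigma_1,\sigma_2)/18P^3: \sigma_0,\sigma_1,\sigma_2\in\bD\},
$$
and the quantity $\Phi_p$ is \emph{defined} in \eqref{eq:Phi} to be $18P^3H(f)$ after substitution. Thus I would first invoke Theorem \ref{thm:X2}, which states that $(c_0,c_1,c_2)$ ranges over the coefficient body $\X_2(\B_p)$ precisely as $(\sigma_0,\sigma_1,\sigma_2)$ ranges over $\bD^3$ (with $\varepsilon$ determined by $\sigma_0$). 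Theorem \Ref{Thm:O} then gives the correspondence $f\leftrightarrow\varphi$ between $\Co_p$ and $\B_p$, so that as $f$ runs through $\Co_p$, the expansion coefficients $(c_0,c_1,c_2)$ of the associated $\varphi$ run through exactly $\X_2(\B_p)$.

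Next I would establish formula \eqref{eq:H}, expressing $18H(f)$ as a polynomial in $c_0,c_1,c_2$. This is a direct computation: substitute the expansions \eqref{eq:a} for $a_2,a_3,a_4$ into $H(f)=a_2a_4-a_3^2$ and collect terms. Because $a_4$ contributes the only $c_2$, the determinant is affine in $c_2$ (the coefficient being $3(c_0-P)$), quadratic in $c_1$, and polynomial in $c_0$; one verifies the stated coefficients by expanding and simplifying. I would then compose this with the parametrization from Theorem \ref{thm:X2}, substituting the explicit $c_0,c_1,c_2$ in terms of $\sigma_0,\sigma_1,\sigma_2$. The result is \eqref{eq:Phi}, whose right-hand side is named $\Phi_p(\sigma_0,\sigma_1,\sigma_2)$; this is the tedious but purely algebraic core.

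Having identified $18P^3H(f)=\Phi_p(\sigma_0,\sigma_1,\sigma_2)$ whenever $f\in\Co_p$ corresponds to the triple $(\sigma_0,\sigma_1,\sigma_2)\in\bD^3$, the set equality follows by a double inclusion that is automatic from surjectivity. For $\subseteq$: any $H(f)$ with $f\in\Co_p$ arises from some $\varphi\in\B_p$ by Theorem \Ref{Thm:O}, whose coefficients $(c_0,c_1,c_2)\in\X_2(\B_p)$ correspond to some $(\sigma_0,\sigma_1,\sigma_2)\in\bD^3$ by Theorem \ref{thm:X2}, giving $H(f)=\Phi_p/18P^3$. For $\supseteq$: given any $(\sigma_0,\sigma_1,\sigma_2)\in\bD^3$, Theorem \ref{thm:X2} produces $(c_0,c_1,c_2)\in\X_2(\B_p)$, hence a $\varphi\in\B_p$ realizing these as its first three coefficients, and Theorem \Ref{Thm:O} yields an $f\in\Co_p$ with these coefficients entering \eqref{eq:a}; then $H(f)=\Phi_p/18P^3$ by construction.

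I expect the main obstacle to be purely computational rather than conceptual: carrying out the substitution of the Theorem \ref{thm:X2} formulae into \eqref{eq:H} correctly to arrive at the specific grouping displayed in \eqref{eq:Phi}, particularly tracking the $\varepsilon$-dependent term and the $(1-|\sigma_0|^2)$, $(1-|\sigma_1|^2)$ factors. A small subtlety worth flagging is that \eqref{eq:H} is affine in $c_2$ with coefficient $3(c_0-P)=-3\sigma_0/P$ (using $c_0=P^{-1}(1-\sigma_0)$), so the $\sigma_2$-term in $\Phi_p$ should remain affine in $\sigma_2$ and multiply $(1-|\sigma_0|^2)(1-|\sigma_1|^2)$; checking this structural consistency gives a useful sanity check on the long expansion. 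Beyond this verification, the proposition requires no optimization and no topological argument, since it is a verbatim restatement of the image of $\bD^3$ under the explicit map $\Phi_p/18P^3$.
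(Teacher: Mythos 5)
Your proposal is correct and takes essentially the same route as the paper, which presents the proposition as an immediate consequence of Theorem \Ref{Thm:O}, the relations \eqref{eq:a}, the resulting identity \eqref{eq:H}, and the substitution of the Theorem \ref{thm:X2} parametrization of $\X_2(\B_p)$ to obtain \eqref{eq:Phi}, exactly the double use of surjectivity you describe. One minor slip in your sanity check: with $c_0=P^{-1}(1-\sigma_0)$ one gets $3(c_0-P)=-3(P^2-1+\sigma_0)/P$ rather than $-3\sigma_0/P$, which is precisely what matches the $\sigma_2$-term $-3P(P^2-1+\sigma_0)(1-|\sigma_0|^2)(1-|\sigma_1|^2)\sigma_2$ in \eqref{eq:Phi}.
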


We note that the function $F_\zeta$ given in \eqref{eq:F} corresponds to
the parameters $(\sigma_0,\sigma_1,\sigma_2)=([\zeta,p^2],0,0).$
Since $\Phi_p(\sigma,0,0)=-18P(1+(P^2-2)\sigma+\sigma^2),$
as a by-product, we have the following description of the set $\Omega_p$
defined by \eqref{eq:Op}.

\begin{lem}\label{lem:Omega}
$$
\Omega_p=\{-P^{-2}\big[1+(P^2-2)\sigma+\sigma^2\big]: \sigma\in\bD\},
$$
where $P=(1+p^2)/p,~0<p<1.$
\end{lem}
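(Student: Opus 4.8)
The plan is to read the statement off directly from Proposition \ref{prop:Phi}, specialized to the one-parameter family $F_\zeta$. Recall from \eqref{eq:Op} that $\Omega_p$ is by definition the image $\{H(F_\zeta):\zeta\in\bD\}$, so it suffices to evaluate $H(F_\zeta)$ through the parametrization furnished by Proposition \ref{prop:Phi} and then let $\zeta$ sweep out $\bD$. First I would record the parameter values attached to $F_\zeta$: as noted just above the lemma, $F_\zeta$ corresponds to $(\sigma_0,\sigma_1,\sigma_2)=([\zeta,p^2],0,0)$. This is precisely the case $\varphi=\rho_\zeta$, for which the proof of Lemma \ref{lem:X2} gives $(w_0,w_1,w_2)=(\zeta,0,0)$, followed by the substitution $\sigma_0=[w_0,p^2]$, $\sigma_1=\sigma_2=0$ introduced in the proof of Theorem \ref{thm:X2}.

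Next I would evaluate $\Phi_p$ on such a triple. Setting $\sigma_1=\sigma_2=0$ in the definition \eqref{eq:Phi} of $\Phi_p$ annihilates everything but the first line, since every surviving term carries a factor $\sigma_1$, $\sigma_1^2$, or $\sigma_2$; hence
$$
\Phi_p(\sigma,0,0)=-18P\big[1+(P^2-2)\sigma+\sigma^2\big].
$$
Combining this with Proposition \ref{prop:Phi} yields
$$
H(F_\zeta)=\frac{\Phi_p([\zeta,p^2],0,0)}{18P^3}=-P^{-2}\big[1+(P^2-2)\sigma+\sigma^2\big],\qquad \sigma=[\zeta,p^2].
$$

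Finally, to upgrade this pointwise identity to the claimed equality of sets I would invoke that $\zeta\mapsto[\zeta,p^2]=(\zeta-p^2)/(1-p^2\zeta)$ is a M\"obius automorphism of $\D$ (indeed it equals $-T_{p^2}$), hence a bijection of $\bD$ onto $\bD$; as $\zeta$ ranges over $\bD$ so does $\sigma$, and the displayed description of $\Omega_p$ follows at once. I do not expect a genuine obstacle here, as the whole argument reduces to a substitution; the only points deserving a word of care are the vanishing of the $\sigma_1$- and $\sigma_2$-dependent terms of $\Phi_p$ and the surjectivity of the pseudo-hyperbolic shift. As a consistency check I would confirm directly that the right-hand side agrees with the previously computed value $H(F_\zeta)=-(1-p^2)^2\zeta/(1-p^2\zeta)^2$: the quadratic factors as $1+(P^2-2)\sigma+\sigma^2=(\sigma+p^2)(\sigma+p^{-2})$, and inserting $\sigma=[\zeta,p^2]$ reproduces exactly the Koebe-type expression, closing the loop.
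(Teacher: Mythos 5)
Your proposal is correct and follows essentially the same route as the paper: the lemma is obtained there as a by-product of the remark that $F_\zeta$ corresponds to the parameters $([\zeta,p^2],0,0)$ in Proposition \ref{prop:Phi}, together with the evaluation $\Phi_p(\sigma,0,0)=-18P\big[1+(P^2-2)\sigma+\sigma^2\big]$ and the fact that $\zeta\mapsto[\zeta,p^2]$ is a bijection of $\bD$ onto itself. Your additional consistency check via the factorization $1+(P^2-2)\sigma+\sigma^2=(\sigma+p^2)(\sigma+p^{-2})$ is a nice confirmation but not part of the paper's argument.
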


The description of the Lemma can now be used to show Propositition \ref{prop:Omega}.

\begin{proof}[Proof of Proposition \ref{prop:Omega}]
Put $t=P^2>4$ and write
$$
f_t(z)=-t\inv\big[1+(t-2)z+z^2\big].
$$
Then $\Omega_p=f_t(\bD)$ by Lemma \ref{lem:Omega}.
To show the monotonicity of $\Omega_p,$ it is enough to prove that
$f_t(\D)\subset f_{t'}(\D)$ for $4<t<t'.$
We note that $f_t(z)$ is univalent for each $t>4.$
This is implied by the elementary fact that
$f(z)=z+az^2$ is univalent (indeed, starlike) if and only if $|a|\le 1/2.$
Hence, $\gamma_t(\theta)=f_t(e^{i\theta}),~0\le\theta\le 2\pi,$ gives
a smooth parametrization of the boundary curve of the Jordan domain $\Omega_p.$
We first observe that $f_t(1)=-1$ is stationary with respect to $t.$
In order to show that $f_t(\D)$ is an increasing family of domains,
it is enough to see that the flow $t\mapsto \gamma_t(\theta)$ is outgoing
from $f_{t_0}(\D)$ at the time $t=t_0$ for each $\theta\in(0,2\pi).$
Since an outer normal vector of the boundary curve $\partial\Omega_p$ at 
$\gamma_t(\theta)$ is given by $\gamma_t'(\theta)/i,$ we should show
that $|\arg[i\dot\gamma_t(\theta)/\gamma_t'(\theta)]|<\pi/2,$
where $\dot\gamma_t(\theta)=\partial\gamma_t(\theta)/\partial t.$
By a straightforward computation, we obtain
$$
\Re\frac{\gamma_t'(\theta)/i}{\dot\gamma_t(\theta)}
=\Re\frac{-t\inv(t-2+2e^{i\theta})e^{i\theta}}{t^{-2}(1-e^{i\theta})^2}
=\frac{t(t-2+2\cos\theta)}{4\sin^2(\theta/2)}>0
$$
for $0<\theta<2\pi.$
Thus we have shown the monotonicity of $\Omega_p$ in $p.$
The other assertion easily follows from the facts that
$\lim_{t\to4}f_t(z)=-(1+z)^2/4$ and that
$\lim_{t\to+\infty}f_t(z)=-z.$
\end{proof}

Finally, we prove our main result.

\begin{proof}[Proof of Theorem \ref{thm:main}]
Considering $\sigma_0=t, \sigma_1=-1$ and $\sigma_2 = 0$ 
with $t \in [0,1]$ in   Proposition \ref{prop:Phi} above, we obtain
\begin{align*}
\Phi_p(t,-1,0)=
&
-18P \big[1+(P^2-2)t+t^2\big] \\
&\quad -3\big[1-7P^2+2P^4+(3P^2-2)t+t^2\big]
(1-t^2) \\
&\quad +P\big[2(1-t^2)+3 t (P^2-1+t)\big](1-t^2).
\end{align*}
Setting $h_p(t):=-\Phi_p(t,-1,0)/18P^3$ gives 
\begin{align*}
18P^3h_p(t)=& -(P+3) t^4-(3 P^3+9 P^2-3 P-6) t^3-(6 P^4-21 P^2-17 P) t^2 \\
&\quad+3(7 P^3+3P^2-13 P-2) t +  (6 P^4-21P^2+20 P+3)
\end{align*}
and
\begin{align*}
18P^3h_p'(t)=
& -4(P+3) t^3 -3\left(3 P^3+9 P^2-3 P-6\right) t^2 \\
&\quad +2\left(6 P^4-21 P^2-17 P\right) t
	+3(7P^3+3 P^2-13 P-2),
\end{align*}
which leads to $h_p(1)=1$ and $h_p'(1)=-2(P-2)(P+1)/3 P<0$. 
Thus the function $h_p(t)$ is strictly decreasing at $t=1$ and therefore $h_p(t_0)>1$ for $t_0<1$ sufficiently close to $1.$ Hence, $|H(f)|=h_p(t_0)>1$ for the function $f \in \mathcal{C}o_p$ corresponding to the parameter triple $(t_0,-1,0).$
Thus $M(p)>1$ follows.

To show the inequality $M(p)>1/3p,$ we use the lower estimate
$$
M(p)\ge h_p(\tfrac7{4P})=\frac P3+g(\tfrac1P)
=\frac1{3p}+\frac p3+g(\tfrac1P),
$$
where
$$
g(x)=-\frac{7x}{48}+\frac{143x^2}{72}-\frac{121x^3}{128}
-\frac{427x^4}{1152}+\frac{343x^5}{384}+\frac{5831x^6}{4608}-\frac{2401x^7}{1536}.
$$
We note that $p/3>1/3P.$
It is not difficult to see that $1/3P+g(1/P)=x/3+g(x)>0$ for $x=1/P\in(0,1/2).$
Therefore, $M(p)>1/3p.$

Finally, we show $M(p)<(1+2p)/3p.$
In view of \eqref{eq:Phi}, one can estimate $\Phi_p$ as in
\begin{align*}
|\Phi_p(\sigma_0,\sigma_1,\sigma_2)|&\le
B_0 + B_1x + B_2x^2 + B_3 (1-x^2) \\
&= (B_2-B_3)x^2+B_1x+ B_0+B_3
\end{align*}
with $x=|\sigma_1|,$ where
\begin{align*}
B_0&= 18P \big[1+(P^2-2)y+y^2\big], \\
B_1&= 3\big[1-7P^2+2P^4+(3P^2-2)y+y^2\big](1-y^2), \\
B_2&= P \big[2(1-y^2)+3y(P^2-1+y)\big](1-y^2),  \\
B_3&= 3P (P^2-1+y)(1-y^2)
\end{align*}
with $y=|\sigma_0|.$
Since
$$
\big[2(1-y^2)+3y(P^2-1+y)\big]-3P (P^2-1+y)= (1-y)(1-y-3P^2)\le0
$$
for $P>2,~0\le y\le1,$ we have $B_2-B_3\le 0.$
Thus, we have
\begin{align*}
|\Phi_p(\sigma_0,\sigma_1,\sigma_2)|&\leq B_0 + B_1 + B_3\\
&= 18 P^3 +6P^2 \left( P^2- P-2\right) 2t\\
& \quad -3 P \left(2 P^3+P^2+2 P-4 \right) t^2 +3 \left(3 P^2+P+2\right) t^3 -3 t^4,
\end{align*}
where $t=1-y\in[0,1]$.

Using the inequalities $2t \leq 1+t^2$ and $6t^3-3t^4\le 3t^2$ for $0\le t\le 1,$
we obtain
\begin{align*}
B_0 + B_1 + B_3 
&\leq 6 \left( P^4+2 P^3-2P^2\right)+3 \left(-3 P^3-6 P^2+4 P+1\right) t^2 \\
&\quad +3P \left(3 P+1\right) t^3 
=: G_p(t).
\end{align*}
The function $G_p(t)$ has a maximum at $t=0$ 
(and a minimum at $t=2 (3 P^3+6 P^2-4 P-1)/ (3 P (3 P+1))>1$) 
for all $P>2$. 
Therefore we have
$$
\sup_{\sigma_0,\sigma_1,\sigma_2 \in \overline{\D}}|\Phi_p(\sigma_0,\sigma_1,\sigma_2)|\leq \max_{0\leq t \leq 1} G_p(t) = G_p(0)= 6P^2 \left(P^2+2 P-2\right),
$$
which implies according to Proposition \ref{prop:Phi}
$$
M(p)\leq \frac{P^2+2P-2}{3P}=\frac{1+2p}{3p}-\frac{p(1-p^2)}{1+p^2} < \frac{1}{3p}+\frac{2}{3}.
$$
This completes the proof.
\end{proof}

\def\cprime{$'$} \def\cprime{$'$} \def\cprime{$'$}
\providecommand{\bysame}{\leavevmode\hbox to3em{\hrulefill}\thinspace}
\providecommand{\MR}{\relax\ifhmode\unskip\space\fi MR }
\providecommand{\MRhref}[2]{%
  \href{http://www.ams.org/mathscinet-getitem?mr=#1}{#2}
}
\providecommand{\href}[2]{#2}

\end{document}